\newtheorem{theorem}{Theorem}[section]
\newtheorem{corollary}[theorem]{Corollary}
\theoremstyle{definition}
\newtheorem{definition}[theorem]{Definition}
\theoremstyle{remark}
\newtheorem{remark}[theorem]{Remark}
\numberwithin{equation}{section}
\newcommand{\RS}{\text{R}\Sigma}
\begin{document}

\title[E-infinity coalgebra structure on chain complexes]{E-infinity coalgebra structure on chain complexes with coefficients in $\mathbb{Z}$}

\author[Jes\'us S\'anchez Guevara]{Jes\'us S\'anchez Guevara$^1$\\	
\hspace{0.05cm}\\
$^1$Center of Math Research and Applications (CIMPA-UCR),\\ 
University of Costa Rica, CR\\
\texttt{\href{mailto: jesus.sanchez_g@ucr.ac.cr}{jesus.sanchez\_g@ucr.ac.cr}}
}

\address{Faculty of Mathematics, University of Costa Rica}
\email{jesus.sanchez\_g@ucr.ac.cr}







\subjclass[2000]{18N70; 55U15}




\keywords{Operad theory, Chain complexes, $E_\infty$-coalgebras, Barrat-Eccles operad}

\begin{abstract}
The aim of this paper is to construct an $E_\infty$-operad 
inducing an $E_\infty$-coalgebra structure on chain complexes with coefficients in $\mathbb{Z}$,
which is an alternative description to the $E_\infty$-coalgebra by the Barrat-Eccles operad. 
\end{abstract}

\maketitle



\section{Introduction}

An $E_\infty$-coalgebra structure on chain complexes of simplicial sets 
with coefficients in $\mathbb{Z}$ is introduced by Smith in 
\cite{smith1994iterating}. 
He used an $E_\infty$-operad, denoted by $\mathfrak{S}$, with
each component $\RS_n$ a $\Sigma_n$-free bar resolution of $\mathbb{Z}$.
The morphisms $f_n:\RS_n \otimes C_*(X)\to C_*(X)^{\otimes n}$ determined by
the $E_\infty$-coalgebra structure contains a family of higher diagonals 
on $C_*(X)$, starting with an homotopic version of the 
iterated Alexander-Whitney diagonal. 
The operad $\mathfrak{S}$ can be seen as a version of the Barratt-Eccles operad 
(see \cite{BARRATT197425}), which is used by
Berger and Fresse in \cite{berger-fresse2002} to construct an explicit coaction on
normalized chain complexes extending the structure given by the Alexander-Whitney diagonal.



In this paper we present a new operad $E_\infty$-operad $\mathcal{R}$  
inducing an $E_\infty$-coalgebra structure on chain complexes,
which is designed following the ideas by Smith in his construction of $\mathfrak{S}$,
and we show in section $3$ that $\mathfrak{S}$ can be obtained 
from $\mathcal{R}$ by operadic quotients.
 

It is worth pointing out that $\mathcal{R}$ 
presents similarities with 
the bar-cobar resolution of Ginzburg-Kapranov (see \cite{2007arXiv0709.1228G}),
and Berger and Moerdij in \cite{Iekecolored2005}
identified this resolution with the $W$-construction of Boardman and Vogt (see \cite{boardman1973homotopy}).
As a consequence, $W$-construction of the Barratt-Eccles operad gives a cofibrant resolution of it.
Then, our operad $\mathcal{R}$ 
may be seen as a middle point between the Barratt-Eccles operad and its $W$-construction.


The results in this paper are based on the author's PhD thesis \cite{sanchez-jesus-thesis-2016},
where $E_\infty$-coalgebras are identified over structures associated to chain complexes,
which are generalizations of uniqueness properties described by Prout\'e in \cite{alain-transf-EM-1983} 
and \cite{alain-transf-AW-1984} of the Eilenberg-Mac lane transformation.




\section{Preliminaries}
\label{sec-preliminaries}

\subsection{Differential graded modules}
\label{subsec-the-category-dga}

A $\mathbb{Z}$-module $M$ is graded 
if there is a collection $\{M_i\}_{i\in \mathbb{Z}}$
of submodules of $M$ such that $M=\bigoplus_{i\in \mathbb{Z}}M_i$.
A differential graded module with augmentation and coefficients in $\mathbb{Z}$,
or $DGA$-module for short,
is a graded $\mathbb{Z}$-module $M$ together with 
a morphism $\partial:M\to M$ of degree $-1$
such that $\partial^2=0$, 
and morphisms $\epsilon:M\to \mathbb{Z}$, $\eta:\mathbb{Z}\to M$ of degree $0$, 
called augmentation and coaugmentation of $M$, respectively,  
such that $\epsilon \circ \eta=\text{id}$. 
The category of $DGA$-modules is denoted $DGA\text{-Mod}$.

\subsection{Operads}
\label{subsec-operads}

An operad $P$ in the monoidal category $DGA\text{-Mod}$ 
is a collection of $DGA$-modules $\{P(n)\}_{n\geq 1}$ together with 
right actions of the symmetric group $\Sigma_n$ on each component $P(n)$, 
and morphisms of the form $\gamma:P(r)\otimes P(i_1)\otimes P(i_r)\to P(i_1+\cdots+i_r)$,
which satisfy the usual conditions of existence of an unit, associativity and equivariance.
The morphisms $\gamma$ will be called composition morphisms of the operad.
A morphism between operads $f:P\to Q$, is 
a collection of $DGA$-morphisms $f_n:P(n)\to Q(n)$ of degree $0$,
respecting units, composition and equivariance. 
The category of operads is denoted $\mathcal{OP}$

If we forget composition morphisms of an operad $P$, 
the collection of $DGA$-modules with right actions that remains
is called an $\mathbb{S}$-module.
They form a category denoted $\mathbb{S}\text{-Mod}$. 
The forgetful functor $U:\mathcal{OP}\to \mathbb{S}\text{-Mod}$
has a right adjoint denoted $F:\mathbb{S}\text{-Mod}\to \mathcal{OP}$, 
called the free operad functor.

\begin{definition}\label{df-ideal-operad}
Let $\mathcal{P}$ be an operad on the category $DGA\text{-Mod}$, 
with composition $\gamma$. 
A sub $\mathbb{S}$-module $\mathcal{I}$ of $U(\mathcal{P})$ 
is called an operadic ideal of $\mathcal{P}$ if it satisfies 
$\gamma(x\otimes y_1\otimes \cdots \otimes y_k)\in \mathcal{I}$,
whenever some elements $x,y_1,\ldots,y_k$ belongs to $\mathcal{I}$.

\end{definition}

\begin{definition}\label{def-op-ideal-quotient}
Let $\mathcal{P}$ be an operad and $\mathcal{I}$ an operadic ideal of $\mathcal{P}$.
We define the quotient operad $\mathcal{P}/\mathcal{I}$ as the operad  
given by $(\mathcal{P}/\mathcal{I})(n)=P(n)/I(n)$ 
for every $n\geq 1$, and composition induced by the composition of $\mathcal{P}$.
\end{definition}

\begin{remark}
Clearly, the operad structure of $\mathcal{P}/\mathcal{I}$ is well defined
by the properties of the operadic ideal $\mathcal{P}$, 
which allow passing to the quotient the composition of $\mathcal{P}$
(see \cite{2007arXiv0709.1228G}~$\S$2.1).
\end{remark}

\subsection{The Bar Resolution}
\label{subsec-the-bar-resolution}
The chain complex with coefficients in $\mathbb{Z}$ given by 
the $\Sigma_n$-free bar resolution of $\mathbb{Z}$ 
is denoted $R\Sigma_n$. 
Recall that degree $m$ elements of $R\Sigma_n$ are 
$\mathbb{Z}$-linear combinations of elements of the form $\sigma[\sigma_1/\cdots/\sigma_m]$, 
where $\sigma,\sigma_1,\ldots,\sigma_m \in \Sigma_n$
and border 
$\partial=\sum_{i=0}^{m}(-1)^i\partial_i$, where 
$\partial_0[\sigma_1/\cdots/\sigma_m]=\sigma_1[\sigma_2/$
$\cdots/\sigma_m]$,
for $0<i<m$, 
$\partial_i[\sigma_1/\cdots/\sigma_m]=[\sigma_1/\cdots/\sigma_i\sigma_{i+1}/\cdots/\sigma_m]$,
and $\partial_m[\sigma_1/$
$\cdots/\sigma_m]=[\sigma_1/\cdots/\sigma_{m-1}]$.
In  zero degree, the $\mathbb{Z}[\Sigma_n]$-module 
is generated by one element, written $[\,]$.
$R\Sigma_n$ is acyclic with
contracting chain homotopy 
the map $\psi_n:R\Sigma_n\to R\Sigma_n$ 
of degree $1$ defined by the relations $\psi_n [\sigma_1/\cdots/\sigma_m]=0$ and
$\psi_n \sigma[\sigma_1/\cdots/\sigma_m]=[\sigma/\sigma_1/\cdots/\sigma_m]$.

\subsection{$E_\infty$-Operads}


\begin{definition}
\label{df-Einf-Operad}
\index{einfinityperad@$E_\infty$-operad}
An operad $\mathcal{P}$ in the category $DGA$-Mod is called $E_\infty$-operad 
if each component $P(n)$ is a $\Sigma_n$-free resolution of $\mathbb{Z}$.
\end{definition}

\begin{definition}
\label{df-Einf-Algebra-Coalgebra}
\index{einfinityalgebra@$E_\infty$-algebra}
\index{einfinitycoalgebra@$E_\infty$-coalgebra}
We call $E_\infty$-coalgebra(algebra) any $\mathcal{P}$-coalgebra(algebra) with $\mathcal{P}$ an
$E_\infty$-operad. 
\end{definition}

We introduce a notion of morphism between $E_\infty$-coalgebras which is well suited for our purpose.

\begin{definition}
Let $\mathcal{P}$ be an $E_\infty$-operad in the category $DGA$-Mod, 
and let $A,B$ $\mathcal{P}$-coalgebras. 
A morphism $f:A\to B$ of $\mathcal{P}$-coalgebras
is a morphism of $DGA$-Mod which preserves the $\mathcal{P}$-coalgebra
structure up to homotopy, that is, the following diagram:

\begin{equation}
\begin{gathered}
\xymatrix{
\mathcal{P}(n)\otimes A\ar[r]^-{\varphi_n^A} \ar[d]_-{1\otimes f}& A^{\otimes n} \ar[d]^-{f^{\otimes n}}\\
\mathcal{P}(n)\otimes B \ar[r]_-{\varphi_n^B} & B^{\otimes n}
}
\end{gathered}
\end{equation}

is commutative up to homotopy for every $n>0$, where $\varphi^A_n$ and $\varphi^B_n$
are the associated morphisms of the $\mathcal{P}$-coalgebra structure of $A$ and $B$,
respectively. The category of $\mathcal{P}$-coalgebras is denoted
$\mathcal{P}\text{-CoAlg}$.
\end{definition}


\section{The Operad $\mathcal{R}$}\label{sec-the-operad-R}

In this section is constructed an $E_\infty$-operad $\mathcal{R}$ 
which is used to describe 
the complex $C_*(X)$ as an $E_\infty$-coalgebra. 


\begin{definition}\label{prop-ideal-operad-R}
Let $S$ be the $\mathbb{S}$-module in the category $DGA\text{-Mod}$,
with components $S(n)=\text{R}\Sigma_n$, 
the $\mathbb{Z}[\Sigma_n]$-free bar resolution of $\mathbb{Z}$.
Define the operad $\mathcal{R}$ as the quotient operad $F(S)/\mathcal{J}$,
where $\mathcal{J}$ is the operadic ideal of the free operad $F(S)$
generated by the elements of zero degree of $F(S)$ of the form $x-y$,
where $x$ and $y$ are not null. 
\end{definition}

\begin{theorem}\label{prop-R-Einf-op}
The operad $\mathcal{R}$ is an $E_\infty$-operad and induces an $E_\infty$-coalgebra structure on $C_*(X)$.
\end{theorem}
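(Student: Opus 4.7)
The plan is to establish the two assertions in order: that each component $\mathcal{R}(n)$ is a $\Sigma_n$-free resolution of $\mathbb{Z}$, and then that one can build structural morphisms $\varphi_n:\mathcal{R}(n)\otimes C_*(X)\to C_*(X)^{\otimes n}$ making $C_*(X)$ into an $\mathcal{R}$-coalgebra.

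For the first assertion I would use the tree decomposition of the free operad: $F(S)(n)$ splits as a direct sum, indexed by reduced $n$-trees $T$, of tensor products $\bigotimes_{v\in T}S(\mathrm{ar}(v))$, with $\Sigma_n$ acting by permutations of the leaves. Since each $S(m)=R\Sigma_m$ is $\mathbb{Z}[\Sigma_m]$-free, every such tensor summand is a free $\Sigma_n$-module, so $F(S)(n)$ is $\Sigma_n$-free, and because the generators of $\mathcal{J}$ lie in degree zero and respect the $\Sigma_n$-action, this freeness descends to $\mathcal{R}(n)$. For acyclicity I would assemble a contracting chain homotopy on $F(S)(n)$ inductively on the number of internal edges of the generating trees, starting from the bar-resolution contractions $\psi_m$ at each vertex, and then verify that it passes to the quotient. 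Equivalently, one filters $\mathcal{R}(n)$ by tree complexity and runs a spectral sequence whose $E^1$-page is built out of tensor products of acyclic bar resolutions, hence is acyclic.

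For the coaction, by the free-forgetful adjunction $F\dashv U$ it is enough to specify, for each $n$, a $\Sigma_n$-equivariant chain map $\alpha_n:R\Sigma_n\otimes C_*(X)\to C_*(X)^{\otimes n}$. In degree zero I would send $[\,]\otimes c$ to the iterated Alexander--Whitney diagonal of $c$, and extend across the $\Sigma_n$-orbit equivariantly. The $\Sigma_n$-freeness and acyclicity of $R\Sigma_n$, together with the usual acyclic-models argument (the targets $C_*(X)^{\otimes n}$ are corepresentable on simplicial models), then supply the inductive extension to higher homological degrees.

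The step I expect to be the main obstacle is showing that the $F(S)$-coaction so defined vanishes on the ideal $\mathcal{J}$, so that it factors through $\mathcal{R}$. This reduces to proving that any two non-null degree-zero tree-compositions in $F(S)(n)$ produce the same chain map on $C_*(X)$. Because degree-zero tree-compositions evaluate to iterations of the strictly coassociative Alexander--Whitney diagonal, different parenthesizations agree on the nose; together with the built-in equivariance, this kills the defining generators $x-y$ of $\mathcal{J}$. Once the coaction descends, the diagrams relating coactions across arities commute up to homotopy thanks to the higher-degree cells of $\mathcal{R}(n)$, yielding the desired $E_\infty$-coalgebra structure on $C_*(X)$.
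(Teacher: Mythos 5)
Your split into (i) showing each $\mathcal{R}(n)$ is a $\Sigma_n$-free resolution of $\mathbb{Z}$ and (ii) building the coaction is the right shape, but both halves diverge from the paper, and the second half has a genuine gap. On (i): the paper does not run an induction on internal edges or a spectral sequence; it writes down one explicit contracting homotopy $\Phi_n$ that acts on a composite $\gamma(x;y_1,\ldots,y_r)$ by applying the bar-resolution contraction $\psi$ to the root label $x$ only, and verifies $\partial\Phi_n+\Phi_n\partial=1$ directly using $\psi_r[\sigma_1/\cdots/\sigma_l]=0$. Your scheme is plausible but heavier. More seriously, your claim that $\Sigma_n$-freeness ``descends'' to $\mathcal{R}(n)$ because the generators of $\mathcal{J}$ are equivariant and sit in degree zero is not an argument: a quotient of a free $\mathbb{Z}[\Sigma_n]$-module by an equivariant submodule need not be free (e.g.\ $\mathbb{Z}[\Sigma_n]/(\sigma-1:\sigma\in\Sigma_n)\cong\mathbb{Z}$ with trivial action). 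Freeness of the quotient needs its own proof.

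On (ii), the gap is in the descent step. As defined, $\mathcal{J}$ is generated by \emph{all} differences $x-y$ of non-null degree-zero elements of $F(S)$, not merely by reparenthesizations of tree compositions; in particular $\sigma[\,]-[\,]$ lies in $\mathcal{J}(n)$ for every $\sigma\in\Sigma_n$. If, as you propose, $\alpha_n$ is extended $\Sigma_n$-equivariantly from $[\,]\otimes c\mapsto$ (iterated Alexander--Whitney of $c$), then descent to the quotient forces the iterated Alexander--Whitney diagonal to be invariant under permutation of the tensor factors, i.e.\ strictly cocommutative --- which is false, and is precisely the reason an $E_\infty$-structure is needed in the first place. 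Coassociativity kills only the reparenthesization generators, not these. The paper never builds the coaction by acyclic models: it uses the unit of the adjunction $F\dashv U$ to produce $p:F(S)\to\mathfrak{S}$ extending the identity of $\mathbb{S}$-modules, argues that $p$ (contraction of tree vertices) respects $\mathcal{J}$, obtains $\overline{p}:\mathcal{R}\to\mathfrak{S}$, and then pulls back Smith's already-established $\mathfrak{S}$-coalgebra structure on $C_*(X)$ along $\overline{p}$. To repair your version you would either have to restrict which degree-zero differences generate $\mathcal{J}$, or abandon the direct equivariant extension and factor through $\mathfrak{S}$ --- which is exactly the paper's route.
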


\begin{proof}
It suffices to exhibit in each arity an contracting chain homotopy. 
In arity $n$, the contracting chain homotopy $\Phi_n:R(n)\to R(n)$ is
obtained by extending on $R(n)$ the contracting chain homotopy $\psi_n$
from the component $\text{R}\Sigma_n$ of $S$ as follows. 

$R(2)$ is isomorphic to $S(2)$, so the contracting chain homotopy remains the same.
When $n>2$, $R(n)$ has two types of elements: 
the elements from the injection $S(n)\to R(n)$
and the elements of the form $\gamma(x;y_1,\ldots,y_r)$, 
where $x\in S(r)$ and $y_j \in R(i_j)$. 
In the first case $\Phi_n$ will behaves as the contracting chain homotopy in $S(n)$, 
and for the second case, we define
$\Phi_n \gamma(x;y_1,\ldots,y_r)=\gamma(\Phi_n(x);y_1,\ldots,y_r)$.



To check that $\partial \Phi_n+\Phi_n \partial=1$,
let $x$ of the form $[\sigma_1|\cdots |\sigma_l]$, with $\sigma_j\in \Sigma_r$. 
Now $\partial \Phi_n\gamma(x;y_1,\ldots,y_r)=\partial \gamma(\Phi_n (x);y_1,\ldots,y_r)=0$.
On the other hand, 

\begin{align}
&\Phi_n \partial \gamma(x;y_1,\ldots,y_r) \\ 
=&\Phi_n\gamma(\partial x;y_1,\ldots,y_r)+
(\text{sign})\sum \Phi_n \gamma(x;y_1,\ldots,\partial y_j,\ldots,y_r)\\
=&\gamma(\Phi_n\partial x;y_1,\ldots,y_r)+
(\text{sign})\sum \gamma(\Phi_n x;y_1,\ldots,\partial y_j,\ldots,y_r)\\
=&\gamma(x-\partial\Phi_n x;y_1,\ldots,y_r)\\
=&\gamma(x;y_1,\ldots,y_r)
\end{align}

When $x$ has the form $\sigma[\sigma_1|\cdots |\sigma_l]$ the verification is similar, 
because compositions $\gamma$ equivariance:
$\gamma(\sigma [\sigma_1|\cdots |\sigma_l];y_1,\ldots,y_r)=\gamma([\sigma_1|\cdots |\sigma_l];y_{\sigma^{-1}(1)},\ldots,y_{\sigma^{-1}(l)})$.

Now, the universal property of the coaugmentation $\iota$ of the adjunction $F\vdash U$,
gives the commutative diagram:

\begin{equation}
\begin{gathered}
\xymatrix{
S \ar[rd]_-{i} \ar[r]^-{\iota} & F(S) \ar[d]^-{p}\\
& \mathfrak{S}
}
\end{gathered}
\end{equation}

Where the morphism $i$ is the identity of $\mathbb{S}$-modules. 
It is easy to see that $p$ respect the ideal $\mathcal{J}$ 
because, when the free operad construction is 
interpreted by rooted trees, $p$ is essentially the contraction of vertices of trees.
Thus $p$ pass to the quotient and we obtain a morphism of operads
$\overline{p}:\mathcal{R}\to \mathfrak{S}$, which implies that
every $\mathfrak{S}$-coalgebra is an $\mathcal{R}$-coalgebra.
\end{proof}

\begin{corollary}
The construction in theorem \ref{prop-R-Einf-op} is functorial.
\end{corollary}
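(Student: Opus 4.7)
The plan is to package the construction of Theorem~\ref{prop-R-Einf-op} as a composite of two already-functorial assignments, so that functoriality follows formally rather than from a fresh verification of any coherence square. The goal is strict commutativity of the naturality square, not just up to homotopy, because the $\mathcal{R}$-structure on $C_*(X)$ is to be \emph{pulled back} from an existing natural structure, not rebuilt ad hoc.

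First, I would recall that Smith's construction in \cite{smith1994iterating} yields a functor $C_* : \sset \to \mathfrak{S}\text{-CoAlg}$, equipping $C_*(X)$ with $\mathfrak{S}$-coalgebra structure maps $\varphi_n^{\mathfrak{S},X} : \mathfrak{S}(n) \otimes C_*(X) \to C_*(X)^{\otimes n}$ produced by the method of acyclic models on the natural model of standard simplices. The outputs of that method are natural transformations in $X$, so for every simplicial map $f : X \to Y$ the square expressing that $C_*(f)$ is a morphism of $\mathfrak{S}$-coalgebras commutes \emph{strictly}.

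Second, I would invoke the operad morphism $\overline{p} : \mathcal{R} \to \mathfrak{S}$ exhibited at the end of the proof of Theorem~\ref{prop-R-Einf-op}, together with the general principle that any morphism of operads $\phi : \mathcal{P} \to \mathcal{Q}$ induces a restriction functor $\phi^* : \mathcal{Q}\text{-CoAlg} \to \mathcal{P}\text{-CoAlg}$ which is the identity on underlying DGA-modules and sends each structure map $\varphi_n^{\mathcal{Q}}$ to $\varphi_n^{\mathcal{Q}} \circ (\phi(n) \otimes 1)$. This assignment is strictly functorial and introduces no new chain homotopies.

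Third, the composite $(\overline{p})^* \circ C_*$ is the candidate functor $\sset \to \mathcal{R}\text{-CoAlg}$. For a simplicial map $f : X \to Y$, the naturality diagram
\begin{equation}
\begin{gathered}
\xymatrix{
\mathcal{R}(n)\otimes C_*(X)\ar[r]^-{\varphi_n^{\mathcal{R},X}} \ar[d]_-{1\otimes C_*(f)} & C_*(X)^{\otimes n} \ar[d]^-{C_*(f)^{\otimes n}}\\
\mathcal{R}(n)\otimes C_*(Y) \ar[r]_-{\varphi_n^{\mathcal{R},Y}} & C_*(Y)^{\otimes n}
}
\end{gathered}
\end{equation}
factors through $\overline{p}(n) \otimes 1$, and since $\overline{p}(n)$ depends only on $n$ the outer square reduces to Smith's strictly commuting square for $f$ at the $\mathfrak{S}$-level. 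Preservation of identities is immediate from $C_*(\mathrm{id}_X) = \mathrm{id}_{C_*(X)}$, and preservation of composition follows from $C_*(g \circ f) = C_*(g) \circ C_*(f)$ together with the fact that vertical pasting of strictly commuting squares is again strictly commuting.

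The main obstacle, I expect, is to resist the tempting alternative of constructing the $\mathcal{R}$-structure maps directly via acyclic models on $\mathcal{R}(n) \otimes C_*(X)$: that route only produces a family of structure maps determined up to homotopy, so functoriality of $C_*(f)$ can then be established only up to homotopy. Routing through $\overline{p}$ sidesteps this, because no independent choices are made at the level of $\mathcal{R}$, and the strict functoriality already established for $\mathfrak{S}$-coalgebras transports verbatim.
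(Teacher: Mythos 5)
Your proposal is correct and follows essentially the same route as the paper: the paper likewise transports the functoriality of Smith's $\mathfrak{S}$-coalgebra structure to $\mathcal{R}$-coalgebras by restriction along the operad morphism $\overline{p}:\mathcal{R}\to\mathfrak{S}$, expressed there as a commuting diagram through $\text{CoEnd}(C_*(X))$. Your version merely makes explicit what the paper leaves implicit (the restriction functor $(\overline{p})^*$ and the strictness of the naturality squares), which is a faithful elaboration rather than a different argument.
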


\begin{proof}
The functoriality of the $\mathfrak{S}$-coalgebra structure is inherited by the $\mathcal{R}$-coalgebra structure
by the operad morphism $\overline{p}:\mathcal{R}\to \mathfrak{S}$ in the proof of theorem \ref{prop-R-Einf-op},
as shows the following commutative diagram for every morphism $f:X\to Y$:
\begin{equation}
\begin{gathered}
\xymatrix{
\mathcal{R} \ar[r]^-{\overline{p}} & \mathfrak{S}\ar[r] \ar[rd]& \text{CoEnd}(C_*(X)) \ar[d]^-{f_*}\\
				  & 	 &\text{CoEnd}(C_*(Y))
}
\end{gathered}
\end{equation}
\end{proof}

We can understand the relation between operad $\mathcal{R}$ and operad $\mathfrak{S}$
with the following proposition.

\begin{corollary}
There is an operad ideal $\mathcal{I}$ such that $\mathfrak{S}\cong \mathcal{R}/\mathcal{I}$.
\end{corollary}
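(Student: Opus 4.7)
The plan is to take $\mathcal{I} := \ker(\overline{p})$, where $\overline{p}: \mathcal{R} \to \mathfrak{S}$ is the operad morphism already constructed at the end of the proof of Theorem \ref{prop-R-Einf-op}, and to conclude via a first-isomorphism-theorem argument in the category of operads in $DGA$-Mod. Concretely, I would define $\mathcal{I}(n) := \ker(\overline{p}_n)$ componentwise. Since $\overline{p}$ is an operad morphism, each $\overline{p}_n$ is a morphism of $DGA$-modules intertwining the $\Sigma_n$-action, so its kernel is stable under the differential, the (co)augmentation, and the symmetric group action. Hence $\mathcal{I}$ is a sub $\mathbb{S}$-module of $U(\mathcal{R})$.

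Next I would verify that $\mathcal{I}$ satisfies the ideal condition of Definition \ref{df-ideal-operad}. The operadic composition $\gamma_{\mathfrak{S}}: \mathfrak{S}(r) \otimes \mathfrak{S}(i_1) \otimes \cdots \otimes \mathfrak{S}(i_r) \to \mathfrak{S}(i_1 + \cdots + i_r)$ is defined on a tensor product and is therefore multilinear in each factor, so it vanishes whenever any one of its inputs is zero. Combined with the intertwining relation $\overline{p} \circ \gamma_{\mathcal{R}} = \gamma_{\mathfrak{S}} \circ \overline{p}^{\otimes(r+1)}$, this shows that $\gamma_{\mathcal{R}}(x; y_1, \ldots, y_r)$ lies in $\mathcal{I}$ whenever any of $x, y_1, \ldots, y_r$ does. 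Thus $\mathcal{I}$ is an operadic ideal, and the quotient $\mathcal{R}/\mathcal{I}$ is well defined by Definition \ref{def-op-ideal-quotient}.

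It then remains to check that $\overline{p}$ is levelwise surjective, so that the induced map $\mathcal{R}/\mathcal{I} \to \mathfrak{S}$ is a levelwise bijective operad morphism, hence an isomorphism. This follows from the commutative triangle recalled in the proof of Theorem \ref{prop-R-Einf-op}: the identification $i: S \to \mathfrak{S}$ of $\mathbb{S}$-modules factors as $p \circ \iota$, where $\iota: S \to F(S)$ is the coaugmentation of the adjunction $F \dashv U$. In particular the image of $p$ already contains $i(S)$, which equals $\mathfrak{S}$ as an $\mathbb{S}$-module, so $p$ (and thus $\overline{p}$) is surjective in every arity. The main obstacle is really just the ideal-theoretic bookkeeping of the second step: expressing $\ker(\overline{p})$ as an operadic ideal in the precise sense of Definition \ref{df-ideal-operad} depends on the multilinearity of $\gamma_{\mathfrak{S}}$ on its tensor product domain. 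Once this is in place, the remainder of the argument is formal.
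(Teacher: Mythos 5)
Your proof is correct, and it is a genuinely more formal route than the one the paper takes. The paper's own proof is a one-line appeal to the fact that the underlying $\mathbb{S}$-module of $\mathfrak{S}$ is $S$, describing $\mathcal{I}$ by generators: it is ``the operadic ideal defined by the identifications needed for the composition $\gamma$ of $\mathfrak{S}$'' -- i.e.\ the relations one must impose on $\mathcal{R}$ so that the freely generated composites collapse onto Smith's composites. You instead characterize the same ideal intrinsically as $\ker(\overline{p})$ and run a first-isomorphism-theorem argument, which forces you to check the three things the paper leaves implicit: that the kernel is a sub $\mathbb{S}$-module compatible with the differential and augmentation, that it satisfies the absorption condition of Definition \ref{df-ideal-operad} (your observation that $\gamma_{\mathfrak{S}}$ kills any pure tensor with a zero factor, combined with $\overline{p}\circ\gamma_{\mathcal{R}}=\gamma_{\mathfrak{S}}\circ\overline{p}^{\otimes(r+1)}$, is exactly the right point), and that $\overline{p}$ is levelwise surjective via the factorization $i=p\circ\iota$ with $i$ the identity of $\mathbb{S}$-modules. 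What the paper's version buys is an explicit presentation of $\mathcal{I}$ by relations, which makes the quotient concretely computable; what your version buys is rigor and independence from the precise form of Smith's $\gamma$ -- the two descriptions agree precisely because $\overline{p}$ is surjective, so its kernel is generated by the identifications the paper refers to. No gap.
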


\begin{proof}
This is because the underlying $\mathbb{S}$-module of $\mathfrak{S}$ is $S$,
and a direct consequence of the definition of compositions $\gamma$ of $\mathfrak{S}$(see \cite{smith1994iterating}),
in the sense that, the operadic ideal $\mathcal{I}$ is defined by the
identification needed for $\gamma$. 
\end{proof}

In \cite{2015arXiv150302701D} Vallette and Dehling describe 
an operad similar to $\mathcal{R}$ 
and 
state (by the use relations) a definition of $E_\infty$-algebras.
In this sense, $\mathcal{R}$-coalgebras can be 
described as follows.


\begin{corollary}
Let $A$ be a $DGA$-module together with:

\begin{enumerate}
\item For every integer $m\geq 1$, $n\geq 1$ and $\sigma,\sigma_1,\ldots,\sigma_n \in \Sigma_m$,
morphisms of degree $n$:
	\[
		\mu_{\sigma[\sigma_1/\cdots/\sigma_n]_m}:A\to A^{\otimes n}.
	\]
\item For every integer $m\geq 1$ and $\sigma\in \Sigma_m$, applications of degree $0$:
	\[
		\mu_{\sigma[\,\,]_m}:A\to A^{\otimes n}.
	\]
\end{enumerate}

Suppose these morphisms satisfy the following relations:
\begin{enumerate}
\item $\mu_{\sigma x}=\mu_{x}\sigma$, where $\sigma$ is the right action on $n$ factors.
\item $\mu_{x+y}=\mu_{x}+\mu{y}$ and $\partial \mu_{x}=\mu_{\partial x}$.
\item $(\mu_{[\,\,]_{m_1}}\otimes \cdots \otimes \mu_{[\,\,]_{m_n}})\mu_{[\,\,]_n}=\mu_{[\,\,]_{m_1+\cdots +m_n}}$.
\end{enumerate}

Then, $A$ is an $\mathcal{R}$-coalgebra. The converse is also true.
\end{corollary}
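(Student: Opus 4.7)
The strategy is to unwind the definition of an $\mathcal{R}$-coalgebra via the free-forgetful adjunction $F\dashv U$ and the quotient presentation $\mathcal{R}=F(S)/\mathcal{J}$. By definition, an $\mathcal{R}$-coalgebra structure on $A$ is an operad morphism $\mathcal{R}\to \text{CoEnd}(A)$, and the universal property of the quotient identifies this with an operad morphism $F(S)\to \text{CoEnd}(A)$ whose kernel contains $\mathcal{J}$. Applying the free operad adjunction reduces the problem once more to giving an $\mathbb{S}$-module morphism $\mu\colon S\to U(\text{CoEnd}(A))$ whose canonical extension to $F(S)$ vanishes on $\mathcal{J}$.

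First I would unpack what such an $\mathbb{S}$-module morphism $\mu$ is, component by component. In arity $m$ it amounts to a $\Sigma_m$-equivariant chain map from $R\Sigma_m$ to the relevant hom-complex, and since $R\Sigma_m$ is $\mathbb{Z}$-free on the basis $\sigma[\sigma_1/\cdots/\sigma_n]_m$ together with the degree-zero generators $\sigma[\,\,]_m$, such a map is determined by its values on those generators. These values are precisely the maps $\mu_{\sigma[\sigma_1/\cdots/\sigma_n]_m}$ and $\mu_{\sigma[\,\,]_m}$ displayed in the statement. The three usual requirements on a chain map of $\mathbb{S}$-modules, namely equivariance, $\mathbb{Z}$-linearity and commutation with the differential, translate exactly into relations (1) and (2). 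So items (1) and (2) of the hypothesis encode the datum of the $\mathbb{S}$-module morphism $\mu$, and no more.

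Next I would analyze what it means for the induced operad morphism $\widetilde{\mu}\colon F(S)\to \text{CoEnd}(A)$ to annihilate $\mathcal{J}$. Using the tree presentation of the free operad, a degree-zero element of $F(S)$ is a linear combination of rooted trees whose vertices are labeled by degree-zero generators of $S$, i.e.\ by elements of the form $\sigma[\,\,]_m$. The generators of $\mathcal{J}$ are differences $x-y$ of two such nonzero elements at a common arity, so vanishing of $\widetilde{\mu}$ on $\mathcal{J}$ is the statement that any two nonzero degree-zero trees of the same arity must be sent to a common operation. Using the equivariance relation (1), the vertex labels can be reduced to the unmarked generators $[\,\,]_m$, after which the collapse of an iterated composition $\gamma([\,\,]_n;[\,\,]_{m_1},\ldots,[\,\,]_{m_n})$ to the single-vertex generator $[\,\,]_{m_1+\cdots+m_n}$ is exactly the content of relation (3). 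Conversely, relations (1)--(3) together suffice, by induction on the depth of trees, to show that $\widetilde{\mu}$ sends every nonzero degree-zero tree of arity $N$ to the same operation as $\mu_{[\,\,]_N}$, and therefore kills $\mathcal{J}$.

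Combining the two directions yields the bijection asserted by the corollary. The step I expect to be the main obstacle is the sufficiency direction in the previous paragraph: one must verify that the three relations of the hypothesis are strong enough to force $\widetilde{\mu}$ to vanish on all of $\mathcal{J}$, and not merely on its explicit generators. This comes down to an induction on the depth of the trees labeling elements of $F(S)$, combined with the stability of operadic ideals under composition; these are routine in principle but require careful bookkeeping with the equivariant action and the linear extension to make sure no nonzero degree-zero tree is left unaccounted for.
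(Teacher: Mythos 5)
Your proposal is correct and follows essentially the same route as the paper, whose entire proof of this corollary is the single sentence that the result ``is directly implied by the operad morphism $\mathcal{R}\to \text{Coend}(A)$''; you have simply supplied the adjunction-plus-quotient unwinding that this one-liner leaves implicit. The only point worth double-checking is your step reducing $\sigma$-decorated degree-zero vertices to the bare generators $[\,\,]_m$: annihilating the generators $\sigma[\,\,]_m-[\,\,]_m$ of $\mathcal{J}$ forces the invariance $\mu_{[\,\,]_m}\sigma=\mu_{[\,\,]_m}$, which does not obviously follow from relations (1)--(3) as listed, though this is a defect of the corollary's statement (and of the paper's terse proof) rather than of your argument.
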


\begin{proof}
This is directly implied by the operad morphism $\mathcal{R}\to \text{Coend}(A)$.
\end{proof}

\section{Funding}

This work was supported by the University of Costa Rica [grant number OAICE-08-CAB-144-2010].
I thank Alain Prout\'e for encouraging me to think about this problem.

\bibliographystyle{amsplain}

\bibliography{principal-bibliography}

\providecommand{\bysame}{\leavevmode\hbox to3em{\hrulefill}\thinspace}
\providecommand{\MR}{\relax\ifhmode\unskip\space\fi MR }
\providecommand{\MRhref}[2]{%
  \href{http://www.ams.org/mathscinet-getitem?mr=#1}{#2}
}
\providecommand{\href}[2]{#2}
\begin{thebibliography}{10}

\bibitem{BARRATT197425}
M.~G. Barratt and P.~J. Eccles, \emph{{$\Gamma^+$}-structures-{I}: a free group
  functor for stable homotopy theory}, Topology \textbf{13} (1974), no.~1, 25
  -- 45.

\bibitem{berger-fresse2002}
C.~{Berger} and B.~{Fresse}, \emph{{Combinatorial operad actions on cochains}},
  Mathematical Proceedings of the Cambridge Philosophical Society \textbf{137}
  (2004), 135--174.

\bibitem{Iekecolored2005}
C.~{Berger} and I.~{Moerdijk}, \emph{{Resolution of coloured operads and
  rectification of homotopy algebras}}, Contemporary mathematics \textbf{431}
  (2007), 31--58.

\bibitem{boardman1973homotopy}
J.~M. Boardman and R.~M. Vogt, \emph{Homotopy invariant algebraic structures on
  topological spaces}, Lecture Notes in Mathematics, Springer Berlin
  Heidelberg, 1973.

\bibitem{2015arXiv150302701D}
M.~{Dehling} and B.~{Vallette}, \emph{{Symmetric homotopy theory for operads}},
  ArXiv e-prints (2015).

\bibitem{2007arXiv0709.1228G}
V.~{Ginzburg} and M.~{Kapranov}, \emph{Koszul duality for operads}, Duke Math.
  J. \textbf{76} (1994), no.~1, 203--272.

\bibitem{alain-transf-EM-1983}
A.~Prout{\'e}, \emph{Sur la transformation d'{E}ilenberg-{M}aclane}, C. R.
  Acad. Sc. Paris \textbf{297} (1983), 193--194.

\bibitem{alain-transf-AW-1984}
\bysame, \emph{Sur la diagonal d'{A}lexander-{W}hitney}, C. R. Acad. Sc. Paris
  \textbf{299} (1984), 391--392.

\bibitem{sanchez-jesus-thesis-2016}
{J}. S{\'a}nchez-Guevara, \emph{About l-algebras}, Ph.D. thesis, Universit{\'e}
  Paris VII, Paris, 2016.

\bibitem{smith1994iterating}
J.~R. Smith, \emph{Iterating the cobar construction}, American Mathematical
  Society: Memoirs of the American Mathematical Society, no. 524, American
  Mathematical Society, 1994.

\end{thebibliography}






\end{document}